\newlength\myindent
\newtheorem{theorem}{Theorem}
\newtheorem{proposition}[theorem]{Proposition}
\theoremstyle{definition}
\newtheorem{definition}[theorem]{Definition}
\newtheorem{assumption}[theorem]{Assumption}
\newtheorem{example}[theorem]{Example}
\newtheorem{remark}[theorem]{Remark}
\DeclareMathOperator{\gr}{gr}
\DeclareMathOperator{\conv}{conv}
\DeclareMathOperator{\cone}{cone}
\DeclareMathOperator{\vertices}{vert}
\DeclareMathOperator{\dom}{dom}
\DeclareMathOperator{\cl}{cl}
\DeclareMathOperator{\Min}{Min}
\DeclareMathOperator{\Int}{int}
\newcommand{\V}{\mathcal{V}}
\renewcommand{\P}{\mathcal{P}}
\newcommand{\Q}{\mathcal{Q}}
\newcommand{\D}{\mathcal{D}}
\newcommand{\G}{\mathcal{G}}
\newcommand{\R}{\mathbb{R}}
\newcommand*{\colonequals}{\mathrel{\vcenter{\baselineskip0.5ex%
\lineskiplimit0pt\hbox{\scriptsize.}\hbox{\scriptsize.}}}=}
\title{On unbounded polyhedral convex set optimization problems}
\author{Niklas Hey$^1$ \and Andreas Löhne$^2$}
\begin{document}
	
\footnotetext[1]{Vienna University of Economics and Business, Institute for Statistics and Mathematics, Welthandelsplatz 1, 1020 Vienna, Austria, niklas.hey@wu.ac.at}
\footnotetext[2]{Friedrich Schiller University Jena, Faculty of Mathematics and Computer Science, 07737 Jena, Germany, andreas.loehne@uni-jena.de}

\maketitle

\begin{abstract}
	A polyhedral convex set optimization problem is given by a set-valued objective mapping from the $n$-dimensional to the $q$-dimensional Euclidean space whose graph is a convex polyhedron. This problem can be seen as the most elementary subclass of set optimization problems, comparable to linear programming in the framework of optimization with scalar-valued objective function. Polyhedral convex set optimization generalizes both scalar and multi-objective (or vector) linear programming. In contrast to scalar linear programming but likewise to multi-objective linear programming, unbounded problems can indeed have minimizers and provide a rich class of problem instances. In this paper we extend the concept of finite infimizers from multi-objective linear programming to not necessarily bounded polyhedral convex set optimization problems. We show that finite infimizers can be obtained from finite infimizers of a reformulation of the polyhedral convex set optimization problem into a vector linear program. We also discuss two natural extensions of solution concepts based on the complete lattice approach. Surprisingly, the attempt to generalize the solution procedure for bounded polyhedral convex set optimization problems introduced in [A. Löhne and C. Schrage. An algorithm to solve polyhedral convex set optimization problems. {\em Optimization} 62(1):131--141, 2013.] to the case of not necessarily bounded problems uncovers some problems, which will be discussed.
	
\medskip
\noindent
{\bf Keywords:} set optimization, solution methods, vector linear programming, multiple objective linear programming
\medskip

\noindent{\bf MSC 2010 Classification: 90C99, 90C29, 90C05} 
\end{abstract}

\section{Introduction}
A set-valued mapping $F: \mathbb R^n \rightrightarrows \mathbb R^q$ is called {\em polyhedral convex} if its {\em graph}
$$ \gr F \colonequals \{ (x,y) \in \R^n\times \R^q \mid y \in F(x)\}$$
is a convex polyhedron. We minimize a polyhedral convex objective map $F$ with respect to the ordering relation on the power set $2^{\R^q}$ of $\R^q$
\begin{align}\label{qo}
	V \preccurlyeq_C W  :\Longleftrightarrow W \subseteq V+C,
\end{align}
for sets $V,W \in \R^q$ and $C$ being a polyhedral convex cone that is pointed and has nonempty interior. This problem is called a {\em polyhedral convex set optimization problem} and is expressed by
\begin{equation}\label{P}
	\text{minimize} \hspace{0.1cm} F(x) \hspace{0.2cm} \text{with respect to} \preccurlyeq_C \hspace{0.2cm} \text{subject to} \hspace{0.2cm} x \in \R^n.
	\tag{P}
\end{equation}
Explicit linear constraints could be added. The resulting problem, however, is of the same type: Indeed let $H:\R^n\rightrightarrows \R^q$ be a polyhedral convex set-valued mapping to be minimized under the linear constraints $A x \leq b$, for $A \in \R^{m \times n}$ and $b \in \R^m$. Then we can define $F$ by its graph as
$$  \gr F \colonequals \{(x,y) \in \R^n\times \R^q \mid (x,y) \in \gr H,\; Ax \leq b\}.$$
Then, $F$ is polyhedral convex and problem \eqref{P} is equivalent to the given constrained problem. If $H$ is defined as $\gr H = \{(x,y) \in \R^n \times \R^q \mid y = P x\}$ for $P\in \R^{q\times n}$, problem \eqref{P} reduces to a vector linear program. For $C=\R^q_+$ we obtain a multiple objective linear program. and for $q=1$ a linear program. 

A solution concept and a solution procedure for a polyhedral convex set optimization problem being {\em bounded} in the sense of
\begin{align}{\label{boundednessrest}}
\exists v \in \mathbb R^q: \{v\} \preccurlyeq_C \bigcup_{x \in \mathbb R^n} F(x).
\end{align}
has been established in \cite{paper,paper1}. The singleton set $\{v\}$ can be seen as a (special form of) lower bound in the space $2^{\R^q}$ equipped with the {\em pre-ordering} (i.e. reflexive and transitive ordering) $\preccurlyeq_C$, while the union of the right hand side in \eqref{boundednessrest} is an {\em infimum} (greatest lower bound) of $F$ over $x \in \R^q$ in this space.

In the present article, we extend the solution concept and the solution procedure introduced in \cite{paper,paper1} to (almost) arbitrary polyhedral convex set optimization problems, in particular, to unbounded problems. 
For the special case of vector linear programming, both a solution concept and a solution procedure for not necessarily bounded problems have been established in \cite{loehne2011}.

Throughout this article we suppose for problem \eqref{P}:
\begin{assumption}\label{ass1}
	The set $\P \colonequals \bigcup_{x \in \R^n} F(x)+C$, called {\em upper image of \eqref{P}}, is free of lines.
\end{assumption} 
In \cite{loehne2011}, for the special case of vector linear programs, the upper image was assumed to have a vertex. This is conform with Assumption \ref{ass1} because $\P$ is a convex polyhedron, as shown below in Section \ref{sec_3}, and a convex polyhedron is free of lines if and only if it has a vertex. Recently, for vector linear programs this assumption was dropped in \cite{weissing20}.

The solution concept discussed and extended here is based on the complete lattice approach to set optimization, see e.g. \cite{rathershortintr} for historical remarks and other approaches. 
The space $$\P \colonequals \{ V \subseteq \R^q \mid V = V+C\}$$
equipped with the ordering defined in \eqref{qo}, which coincides with $\supseteq$ on $\P$ and hence is a partial ordering on $\P$ (i.e. reflexive, transitive and antisymmetric), provides a {\em complete lattice}. This means that for every subset of $\P$ an infimum in the sense of the (uniquely defined) greatest lower bound exists. The infimum of a subset $\V \subseteq \P$ is given by 
$$ \inf \V = \bigcup_{V\in \V} V.$$
A solution concept for the complete lattice approach in set optimization was introduced in \cite{heydeloehne11} (contributed by A.H. Hamel). 
It is typical for this approach that, in contrast to scalar optimization, {\em infimum attainment} and {\em minimality} are different conditions. A {\em solution} in the sense of the {\em complete lattice approach} satisfies both {\em infimum attainment} and {\em minimality}.
Likewise to solutions for unbounded vector linear programs \cite{loehne2011}, a solution of an unbounded polyhedral convex set optimization problem is expected to consist of finitely many feasible points and finitely many feasible directions. The latter part is new in comparison to the bounded case in \cite{paper,paper1}. 
We discuss two possible extensions of this solution concept with two different minimality notions.

The solution procedure for bounded polyhedral set optimization problems introduced in \cite{paper,paper1} consists of two phases: In the first phase a vector linear program is solved to obtain infimum attainment, i.e., to compute an infimizer. In the second phase finitely many linear programs (combined with vertex enumeration in the objective space $\R^q$) are solved to ensure minimality, i.e., to ensure that the elements of the infimizer are minimizers. 

In this article we introduce the concept of an infimizer for a not necessarily bounded polyhedral convex set optimization problem. 
We show that such an infimizer can be obtained, likewise to \cite{paper,paper1}, by a reformulation of the problem into a vector linear program. Moreover we discuss two types of minimizers, which lead to two different solution concepts. For both notions we did not succeed with a natural generalization of the second phase of the solution method for the bounded case from \cite{paper,paper1}. The one solution concept destroys the convexity of the problem while the other one seems to require nonlinear scalar problems for the solution method. Furthermore, we slightly extend some results in \cite{paper,paper1} for the bounded case as we allow a more general representation of the graph of $F$.

Applications for set optimization can be found in mathematical finance, when we look at markets with frictions, see e.g. \cite{hamelheyde10,riskmeasure,loehnerudloff14}. Solution concepts in set optimization are also useful to provide an approach to vector optimization based on infimum attainment, see e.g. \cite{heydeloehne11,loehne2011,loehnetammer07}. Other applications for set-valued optimization appear in multivariate statistics \cite{hamelkostner18}. We want to remark that the solution concepts of the complete lattice approach are frequently used in theoretical papers while, so far,  the focus of the mentioned applications is on infimum attainment only. It remains an interesting task for the future to point out the impact of minimality for applications. In view of the mentioned problems in the unbounded case it could be worth to discuss whether or not minimality should be replaced by another property.

\section{Preliminaries}

A set $P \subseteq \mathbb R^q$ is called {\em polyhedral convex} or a {\em convex polyhedron} if it can be expressed as
\begin{align}{\label{hrep0}}
	P=\{y \in \mathbb R^q \mid By \geq b\},
\end{align}
for $B \in \mathbb R^{m \times q}$ and $b \in \mathbb R^m$. The representation of $P$ in \eqref{hrep0} is called an {\em H-representation}. 
The {\em recession cone} of $P$ in \eqref{hrep0} is the set
$$0^{+}P\colonequals \{y \in \mathbb R^q \mid By \geq 0\}.$$
 A convex polyhedron $P$ is {\em bounded} if and only if $0^{+}P=\{0\}$. A bounded polyhedral convex set is called a {\em polytope}.
 A convex polyhedron $P$ can also be written as the convex hull of finitely many points $v_1,...,v_k \in \mathbb R^q$ with $k \in \mathbb N$ and the conic hull (the smallest convex cone containing a given set) of finitely many directions $d_1,...,d_l \in \mathbb R^q$ with $l \in \mathbb N_0$ in the form
\begin{align}{\label{vrep0}}
    P=\conv\{v_1,...,v_k\}+\cone\{d_1,...,d_l\}.
\end{align}
A representation of $P$ as in \eqref{vrep0} is called a {\em{V-representation}}. Note that we use the convention $\cone \emptyset=\{0\}$ in order to represent polytopes without any direction. We have
$$ 0^{+}P = \cone\{d_1,...,d_l\}.$$ 

Let $P$ be a convex polyhedron. A point $v$ of $P$ is called a {\em vertex} of $P$ if there do not exist $v_1,v_2 \in P$ with $v_1 \neq v_2$ such that $v=\lambda v_1+(1-\lambda)v_2$ for some $\lambda \in (0,1)$. The set of vertices of $P$ is denoted by $\vertices P$. An element $d \in 0^{+}P$ is called an {\em extremal direction} of $P$ if $d \neq 0$ and for $u,w \in 0^{+}P$ with $d=u+w$ we have $u,w \in \cone\{d\}$. $P$ is called {\em pointed} if it has a vertex. $P$ is pointed if and only if $P$ contains no line (see e.g. \cite{lauritzen}, Theorem 4.7). If $P$ is {\em pointed} then there exists a V-representation of $P$ in the form \eqref{vrep0} with $v_1,...,v_k$ being the vertices and $d_1,...,d_l$ being the extremal directions of $P$.

We say the expression 
\begin{align}{\label{prep}}
	P=\{y \in \mathbb R^q \mid \exists x\in \mathbb R^n:\; Ax+By \geq b\}
\end{align}
is a {\em P-representation} (here $P$ stands for {\em projection}) of a convex polyhedron $P$. Using the Fourier-Motzkin elimination (see e.g. \cite{lauritzen}) for the variables $x_1,...,x_n$ in \eqref{prep} we obtain an H-representation of $P$, showing that \eqref{prep} indeed describes a convex polyhedron. An H-representation is clearly a special case of a P-representation. If $V$ is the matrix with columns $v_1,...,v_k$ and $D$ is the matrix with columns $d_1,\dots,d_l$, then 
$$ P = \{ y \in \R^q \mid \exists (\lambda,\mu) \in \R^k \times \R^l:\; V \lambda + D \mu = y,\; \lambda \geq 0,\; e^T \lambda = 1,\; \mu \geq 0\},$$
where $e^T=(1,\dots,1)$, shows that the V-representation in \eqref{vrep0} is also a special case of a P-representation. 

If $Q \subseteq \R^n \times \R^q$ is a nonempty convex polyhedron and 
	$$ P\colonequals \{y \in \R^q \mid \exists x \in \R^n:\; (x,y) \in Q\},$$ 
	then
	\begin{equation}\label{rec_prep}
		 0^+P \colonequals \{y \in \R^q \mid \exists x \in \R^n:\;  (x,y) \in 0^+Q\}.
	\end{equation}	
This statement can be easily shown by using a V-representation of $Q$.
Thus, if $P$ in \eqref{prep} is nonempty, its recession cone is
\begin{equation*}
	0^{+}P=\{y \in \mathbb R^q \mid \exists x \in \mathbb R^n: Ax+By \geq 0\}.
\end{equation*}

 For two convex polyhedra $P_1,P_2 \subseteq \mathbb R^q$ the Minkowski sum $P_1+P_2$ is a convex polyhedron (see e.g. \cite[Corollary 19.3.2]{rockafellar}). The recession cone of the Minkowski sum of two convex polyhedra $P_1$ and $P_2$ can be expressed as 
\begin{equation}\label{eq_recc}
0^{+}(P_1+P_2)=0^{+}P_1+0^{+}P_2,	
\end{equation}
which can be seen by using V-representations.
The Cartesian product $P_1 \times P_2$ is again a convex polyhedron. This becomes obvious when working with H-representations.

Let $C \subseteq \mathbb R^q$ be a pointed polyhedral convex cone with nonempty interior. $C$ can be expressed by some matrix $Z \in \mathbb R^{q \times p}$ as
\begin{align}{\label{coneChrep}}
	C=\{y \in \mathbb R^q \mid Z^Ty \geq 0\}.
\end{align}
By $\Int C \neq \emptyset$ there exists $y \in C$ with $Z^T y>0$. The polyhedral convex cone $C$ in \eqref{coneChrep} is pointed if and only if the matrix $Z$ has rank $q$. $C$ defines a partial ordering $\leq_C$ on $\mathbb R^q$ by 
\begin{align}{\label{vectorordering}}
    v \leq_C w :\Longleftrightarrow w-v \in C.
\end{align}
We say $\bar{z} \in P$ is {\em{C-minimal}} in a set $P\subseteq \R^q$ if there is no $z \in P$ with $z \neq \bar{z}$ and $z \leq_C \bar{z}$. By $\Min_C P$ we denote the set of all $C$-minimal points of $P$. The condition $z \in \Min_C P$ is equivalent to $z \in P$ and $z \notin P+C\!\setminus\!\{0\}$.

The ordering relation $\preccurlyeq_C$ on the power set $2^{\R^q}$ of $\R^q$ is a pre-order, i.e., it is reflexive and transitive. It is also a pre-order in the subspace $\G$ of all closed convex subsets of $\R^q$. For any subset $\D$ of $\G$ an infimum exists and can be expressed as
(see e.g. \cite{rathershortintr}, \cite{loehne2011})
\begin{align}{\label{infimumallgemein}}
		\inf \mathcal{D}=\cl \conv \bigcup_{D \in \mathcal{D}} (D+C).
\end{align}
We use here the convention $\inf \emptyset=\emptyset$.

Let $F: \mathbb R^n \rightrightarrows \mathbb R^m $ be a set-valued map. The {\em domain} of $F$ is the set $\dom F=\{x \in \mathbb R^n \mid F(x) \neq \emptyset\}$. The {\em{graph}} of $F$ is defined as	$\gr F\colonequals\{(x,y) \in \mathbb R^{n \times q} \mid y \in F(x)\}$.
We say a set-valued map $F$ is {\em{polyhedral convex}} if the graph of $F$ is a convex polyhedron. For $y \in \mathbb R^q$ we denote by
$F^{-1}(y)\colonequals\{x \in \mathbb R^n \mid (x,y) \in \gr F\}$ the {\em{inverse}} of $F$ at $y$.
We define $G(x)\colonequals(0^{+}F)(x)$ as the {\em recession mapping} of $F$, which is defined by $\gr G = 0^{+}\gr F$.

\begin{proposition}{\label{constantrecessioncone}}
Let $F: \mathbb R^n \rightrightarrows \mathbb R^q$ be a polyhedral convex set-valued map. Then the mapping $x \mapsto 0^{+}[F(x)]$ is constant on $\dom F$. Moreover, for all $x \in \dom F$ and all $u \in \dom G$ we have $0^+[F(x)] = 0^+[G(u)] = G(0)$.
\end{proposition}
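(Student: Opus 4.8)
The plan is to reduce everything to reading off recession cones of fibers from a single H-representation of $\gr F$. Since $F$ is polyhedral convex, fix an H-representation $\gr F = \{(x,y) \in \R^n \times \R^q \mid Ax + By \geq b\}$ with suitable matrices $A,B$ and a vector $b$. For any $x \in \dom F$ the fiber is
$$ F(x) = \{y \in \R^q \mid By \geq b - Ax\}, $$
a nonempty convex polyhedron given in H-representation, so by the recession-cone formula recalled after \eqref{hrep0} we obtain $0^+[F(x)] = \{d \in \R^q \mid Bd \geq 0\}$. The crucial observation is that this set depends only on $B$ and not on the right-hand side $b - Ax$; hence it is the same cone for every $x \in \dom F$, which already establishes that $x \mapsto 0^+[F(x)]$ is constant on $\dom F$.

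Next I would identify this constant cone with $G(0)$. By definition $\gr G = 0^+\gr F = \{(a,d) \in \R^n\times\R^q \mid Aa + Bd \geq 0\}$, so its fiber over the origin is $G(0) = \{d \in \R^q \mid Bd \geq 0\}$, which is exactly the cone computed above. Thus $0^+[F(x)] = G(0)$ for all $x \in \dom F$.

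Finally, the same reasoning applies verbatim to $G$, which is itself polyhedral convex because $\gr G = 0^+\gr F$ is a polyhedral cone. For $u \in \dom G$ the fiber $G(u) = \{d \in \R^q \mid Bd \geq -Au\}$ is a nonempty polyhedron whose recession cone is again $\{d \mid Bd \geq 0\} = G(0)$, so $0^+[G(u)] = G(0)$. (Alternatively, since $\gr G$ is a cone it satisfies $0^+\gr G = \gr G$, so passing to recession cones does not change its slice over the origin.) This yields the chain $0^+[F(x)] = 0^+[G(u)] = G(0)$.

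The result is essentially routine, and the only point requiring care — the place where a careless argument would slip — is the restriction to $\dom F$ (respectively $\dom G$): the H-representation recession-cone identity $0^+\{y \mid By \geq c\} = \{d \mid Bd \geq 0\}$ holds only when the underlying polyhedron is nonempty, so the statement genuinely needs $F(x) \neq \emptyset$. Note also that $\dom G \neq \emptyset$, since $(0,0) \in 0^+\gr F = \gr G$, so the assertion about $G$ is not vacuous. Everything else is a direct reading-off from the single matrix $B$ shared by all fibers, and one could equally run the argument through V-representations of $\gr F$ and $\gr G$ as is done elsewhere in the paper.
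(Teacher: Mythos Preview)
Your proof is correct and follows essentially the same approach as the paper: fix an H-representation $\gr F = \{(x,y)\mid Ax+By\geq b\}$, read off $0^{+}[F(x)]=\{d\mid Bd\geq 0\}$ for $x\in\dom F$, and obtain $G$ by setting $b=0$. The paper compresses the last two paragraphs into the single sentence ``The recession mapping $G$ is obtained by setting $b=0$, which yields the remaining statements,'' whereas you spell out explicitly that $G(0)=\{d\mid Bd\geq 0\}$ and that the same fiber computation applies to $G$; but the underlying argument is identical.
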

\begin{proof}
The graph of $F$ has an H-representation 
$$\gr F = \{(x,y) \in \R^n\times \R^q \mid A x + B y \geq b\}$$
 and thus  
$F(x) = \{y \in \R^q \mid B y \geq b-Ax\}$. For $x \in \dom F$ we obtain $0^{+}[F(x)]=\{y \in \R^q \mid B y \geq 0\}$, which is independent of $x$.
The recession mapping $G$ is obtained by setting $b=0$, which yields the remaining statements. 
\end{proof}

\begin{proposition}{\label{infpolyhedron}} 
Let $F: \mathbb R^n \rightrightarrows \mathbb R^q$ be a polyhedral convex set-valued map with $\dom F \neq \emptyset$ Then the set $\bigcup_{x \in \mathbb R^n} F(x)$ is a convex polyhedron and
\begin{align*}
	0^{+}\bigcup_{x \in \mathbb R^n} F(x)=\bigcup_{x \in \mathbb R^n} G(x).
\end{align*}
Moreover, we have $0^+(\dom F)=\dom G$.
\end{proposition}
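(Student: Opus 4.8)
The plan is to observe that each of the three sets appearing in the statement is a \emph{projection} of the polyhedron $\gr F$ (or of its recession cone $0^+\gr F$), so that the entire proposition reduces to the projection formula for recession cones recorded in \eqref{rec_prep}, together with the defining relation $\gr G = 0^+\gr F$ of the recession mapping.

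First I would establish the convex polyhedron claim. Writing
$$\bigcup_{x \in \R^n} F(x) = \{ y \in \R^q \mid \exists x \in \R^n:\; (x,y) \in \gr F\}$$
exhibits this union as a P-representation in the sense of \eqref{prep}, obtained from any H-representation of $\gr F$; by Fourier--Motzkin elimination it is therefore a convex polyhedron. The hypothesis $\dom F \neq \emptyset$ guarantees $\gr F \neq \emptyset$, so that \eqref{rec_prep} becomes applicable. For the recession-cone identity I would then apply \eqref{rec_prep} with $Q = \gr F$ and $P = \bigcup_{x} F(x)$, giving
$$0^{+}\bigcup_{x \in \R^n} F(x) = \{ y \in \R^q \mid \exists x \in \R^n:\; (x,y) \in 0^+\gr F\}.$$
Invoking $\gr G = 0^+\gr F$ rewrites the right-hand side as $\{ y \mid \exists x:\; y \in G(x)\} = \bigcup_{x} G(x)$, which is the asserted identity.

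The last equality $0^{+}(\dom F) = \dom G$ is handled symmetrically, projecting onto the $x$-block instead of the $y$-block: since $\dom F = \{ x \mid \exists y:\; (x,y) \in \gr F\}$ is again a projection of $\gr F$, the analogue of \eqref{rec_prep} with the roles of the two coordinate blocks interchanged yields $0^{+}(\dom F) = \{ x \mid \exists y:\; (x,y) \in 0^+\gr F\} = \{ x \mid \exists y:\; y \in G(x)\} = \dom G$, once more using $\gr G = 0^+\gr F$. I do not expect a genuine obstacle here, as \eqref{rec_prep} carries the entire argument; the only point requiring a word of care is that \eqref{rec_prep} is phrased for eliminating the $x$-variables, so for the domain statement one must note that the formula is symmetric in the two coordinate blocks and hence applies verbatim to the projection onto $\R^n$. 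The proof is thus essentially a matter of correctly identifying each set as a projection and substituting the definition of the recession mapping.
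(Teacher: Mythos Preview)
Your proposal is correct and follows essentially the same approach as the paper: both identify $\bigcup_x F(x)$ and $\dom F$ as projections of $\gr F$, invoke the P-representation/Fourier--Motzkin argument for polyhedrality, and apply \eqref{rec_prep} together with $\gr G = 0^+\gr F$ to obtain the recession-cone identities. Your write-up is in fact slightly more explicit than the paper's, particularly in spelling out the symmetry of \eqref{rec_prep} needed for the $\dom F$ statement.
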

\begin{proof}
We have
\begin{equation*}\label{pr1}
	\bigcup_{x \in \mathbb R^n} F(x) = \{ y \in \R^q \mid \exists x \in \R^n:\; (x,y) \in \gr F\}
\end{equation*}
Since $\gr F$ is a convex polyhedron, this is a P-representation and thus a convex polyhedron. 
The second statement follows from \eqref{rec_prep}.
We have 
$$ \dom F = \{x \in \R^n \mid \exists y \in \R^q: (x,y) \in \gr F\},$$
which implies the last statement is a similar way.
\end{proof}

\begin{proposition}\label{prop_poly_conv}
	Let $P_i \subseteq \R^q$, $i \in \{1,\ldots,k\}$ be finitely many convex polyhedra such that $0^+ P_i = D$ for all $i \in \{1,\ldots,k\}$. Then
	$$ \conv \bigcup_{i=1}^k P_i \qquad \text{and} \qquad \cone \bigcup_{i=1}^k P_i + D $$
	are convex polyhedra.
	Moreover, we have
	\begin{equation*}	
			0^+ \conv \bigcup_{i=1}^k P_i = D
	\end{equation*}
	and
	\begin{equation}\label{eq22}	
			\cl\cone \bigcup_{i=1}^k P_i = \cone \bigcup_{i=1}^k P_i + D.
	\end{equation}
\end{proposition}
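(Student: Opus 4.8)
The plan is to reduce everything to V-representations and to the recession-cone calculus already recorded in \eqref{vrep0} and \eqref{eq_recc}. Since each $P_i$ is a convex polyhedron with $0^+P_i = D$, I would first fix for every $i$ a V-representation $P_i = \conv V_i + D$, where $V_i = \{v^i_1,\dots,v^i_{k_i}\}$ is a finite set of points and $D = \cone\{d_1,\dots,d_l\}$ is the common recession cone (the direction part of any V-representation of $P_i$ generates $0^+P_i = D$). Writing $V \colonequals \bigcup_{i=1}^k V_i = \{w_1,\dots,w_N\}$ for the pooled, finite point set, the whole argument rests on the single identity
\begin{equation*}
	\conv \bigcup_{i=1}^k P_i = \conv V + D.
\end{equation*}

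For this identity I would argue by two inclusions. The inclusion ``$\supseteq$'' follows because $V \subseteq \bigcup_i P_i$, so $\conv V \subseteq \conv\bigcup_i P_i$, and since $P_i + D = P_i$ for every $i$ (as $D = 0^+P_i$) the convex set $\conv\bigcup_i P_i$ is stable under adding $D$; hence $\conv V + D \subseteq \conv\bigcup_i P_i + D = \conv\bigcup_i P_i$. The inclusion ``$\subseteq$'' follows because $P_i = \conv V_i + D \subseteq \conv V + D$ and the right-hand side is convex, so it contains $\conv\bigcup_i P_i$. Now $\conv V + D$ is a V-representation, hence a convex polyhedron, which settles the first polyhedrality claim; and by \eqref{eq_recc} together with $0^+\conv V = \{0\}$ (as $\conv V$ is a polytope) and $0^+D = D$ (as $D$ is a polyhedral cone) I obtain $0^+\conv\bigcup_i P_i = \{0\}+D = D$.

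For the conic part I would pass to nonnegative combinations. Using that $\cone S$ is the set of all finite nonnegative combinations of elements of $S$, one checks that $\cone\bigcup_i P_i = \R_{\geq 0}\cdot(\conv V + D)$ consists of the origin together with all $\sum_j \alpha_j w_j + \sum_r \beta_r d_r$ with $\alpha_j,\beta_r \geq 0$ and $\sum_j \alpha_j > 0$. Adding $D$ removes precisely the coupling ``$\sum_j\alpha_j = 0 \Rightarrow \beta = 0$'', so that
\begin{equation*}
	\cone\bigcup_{i=1}^k P_i + D = \cone\{w_1,\dots,w_N,d_1,\dots,d_l\},
\end{equation*}
a finitely generated, hence polyhedral, convex cone; this proves the second polyhedrality claim.

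Finally, for \eqref{eq22} I would show the two inclusions separately. Since $0 \in D$ we have $\cone\bigcup_i P_i \subseteq \cone\bigcup_i P_i + D$, and as the latter is a closed convex polyhedron, $\cl\cone\bigcup_i P_i \subseteq \cone\bigcup_i P_i + D$. The reverse inclusion is the crux: I would verify that each generator of the finitely generated cone above lies in $\cl\cone\bigcup_i P_i$. Each $w_j$ lies already in $\cone\bigcup_i P_i$; while for a direction $d_r$, picking any $w_j \in P_i$ with $d_r \in 0^+P_i$ gives $\tfrac1t w_j + d_r \in \cone\bigcup_i P_i$ for all $t>0$, and letting $t\to\infty$ shows $d_r \in \cl\cone\bigcup_i P_i$. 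Since the closure of a convex cone is again a convex cone, $\cl\cone\bigcup_i P_i$ contains the convex conic hull of all generators, i.e.\ $\cone\bigcup_i P_i + D$, giving equality. The main obstacle is exactly this last step: the conic hull $\cone\bigcup_i P_i$ is in general not closed, and the role of $D$ is to supply precisely the missing recession directions along which the boundary of the closure is approached. Making this limiting argument rigorous, rather than the routine polyhedrality bookkeeping, is where the real content of the proposition lies.
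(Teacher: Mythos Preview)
Your proof is correct and follows the paper's route almost exactly: both write $P_i = Q_i + D$ with $Q_i$ a polytope, pool the $Q_i$ into a single polytope $Q$, and read off polyhedrality and the recession cone from the resulting V-representation $\conv\bigcup_i P_i = Q + D$. For \eqref{eq22} the paper quotes the identity $\cl\cone P = \cone P \cup 0^+P$ from \cite[Theorem~8.2]{rockafellar}, whereas you supply the elementary limit argument $\tfrac{1}{t}w_j + d_r \to d_r$ directly; this is the only difference and it is cosmetic rather than substantive.
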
	
\begin{proof}
	There are polytopes $Q_i \subseteq \R^q$ such that $P_i = Q_i + D$ for all $i \in \{1,\ldots,k\}$. The set $Q\colonequals \conv \bigcup_{i=1}^k Q_i$ is a polytope.
	Thus 
	$$P \colonequals \conv \bigcup_{i=1}^k P_i = \conv \bigcup_{i=1}^k [Q_i + D] = Q + D$$
	is a convex polyhedron with recession cone $D$.	We have 
	\begin{align*}
		\cone \bigcup_{i=1}^k P_i + D &= \cone P + D = \cone \left[Q + D\right] + D 
									  = \cone Q + D,
	\end{align*}
	which is a convex polyhedron because the conic hull of the polytope $Q$ is a convex polyhedron.
	
	To show \eqref{eq22}, recall that $\cl\cone P = \cone P \cup 0^+ P$ holds, which is a consequence of  \cite[Theorem 8.2]{rockafellar}. Thus 
	\begin{equation*}
		\cl\cone \bigcup_{i=1}^k P_i = \cl\cone P = \cone P \cup D = \cone P + D = \cone \bigcup_{i=1}^k P_i + D. \qedhere
	\end{equation*}
\end{proof}

Note that the condition $0^+ P_i = D$ cannot be omitted in the previous result, which can be seen by the example
$$ \conv \left(\R^2_+ + \{(1,1)^T\} \cup \{(0,0)^T\} \right) = \Int \R^2_+ \cup \{(0,0)^T\}.$$
Moreover, $\cone \bigcup_{i=1}^k P_i$ is not necessarily a convex polyhedron, not even for $k=1$. For example, 
$$ \cone (\R^2_+ + \{(1,1)^T\}) = \Int \R^2_+ \cup \{(0,0)^T\}.$$

\section{Finite infimizers}\label{sec_3}

In this section we extend the concept of a finite infimizer to the case of {\em unbounded} polyhedral convex set-valued optimization problems \eqref{P}. The new concept extends both finite infimizers for bounded polyhedral convex set-valued optimization problems \cite{paper,paper1} and finite infimizers for not necessarily bounded vector linear programs \cite{loehne2011}. 

\begin{definition}\label{def_inf}
A tuple $(\bar{X},\hat{X}$) of finite sets $\bar{X} \subseteq \dom F$, $\bar{X} \neq \emptyset$ and $\hat{X} \subseteq \dom G \setminus \{0\}$ is called a {\em finite infimizer} for problem \eqref{P} if 
\begin{align}{\label{finiteinfimizer}}
	\bigcup_{x \in \mathbb R^n} F(x) \subseteq \conv \bigcup_{x \in \Bar{X}} F(x) + \cone \bigcup_{x \in \hat{X}} G(x) +C
\end{align}
\end{definition}

In this definition, \eqref{finiteinfimizer} can be replaced by
\begin{align}{\label{finiteinfimizer2}}
	\bigcup_{x \in \mathbb R^n} F(x)+C = \conv \bigcup_{x \in \Bar{X}} F(x) + \cone \bigcup_{x \in \hat{X}} G(x) +C,
\end{align}
which can be seen by using Proposition \ref{infpolyhedron}.
This shows that a finite infimizer generates the upper image $\P$ of \eqref{P}. By Proposition \ref{infpolyhedron}, both 
the {\em image} $\Q\colonequals \bigcup_{x \in \mathbb R^n} F(x)$ and the {\em upper image} 
$$\P \colonequals \Q+C = \bigcup_{x \in \mathbb R^n} (F(x)+C)$$
of \eqref{P} are convex polyhedra. From \eqref{eq_recc} and Proposition \ref{constantrecessioncone} we obtain
$$0^{+} \P=0^{+}\Q+C$$
and 
$$ \forall x \in \dom F:\quad 0^{+}[F(x)+C]=0^{+}[F(x)]+C = G(0) + C.$$

\begin{remark}\label{rem0} The concept of an {\em upper image} of \eqref{P} corresponds to an infimum with respect to the ordering $\preccurlyeq_C$ in the space $\G$, compare \eqref{infimumallgemein}. By Proposition~\ref{infpolyhedron}, the infimum can be expressed without the closure and the convex hull. The term {\em upper image} is often used in the framework of vector optimization, when the property of being an infimum in the space $\G$ is not addressed.
\end{remark}

Even though the convex hull of the union of finitely many convex polyhedra is not necessarily closed, the expression $\conv \bigcup_{x \in \bar X} F(x)$ in Definition \ref{def_inf} is a convex polyhedron, compare Proposition \ref{prop_poly_conv}.
Note that the term $\cone \bigcup_{x \in \hat X} G(x)$ in Definition \ref{def_inf} is not necessarily closed, compare the example after Proposition \ref{prop_poly_conv}.
Nevertheless the term
\begin{equation}\label{eq99}
	\conv \bigcup_{x \in \Bar{X}} F(x) + \cone \bigcup_{x \in \hat{X}} G(x)
\end{equation}
in Definition~\ref{def_inf} is always a convex polyhedron. We also show in the next proposition that
in Definition \ref{def_inf} the exclusion of zero in the set $\hat X$ is not restricting.

\begin{proposition}\label{no_zero} Let $(\bar{X},\hat{X}$) be a tuple of finite sets $\bar{X} \subseteq \dom F$, $\bar{X} \neq \emptyset$ and $\hat{X} \subseteq \dom G$. Then $(\bar X, \hat X)$ satisfies \eqref{finiteinfimizer} if and only if $(\bar X, \hat X \setminus \{0\})$ satisfies \eqref{finiteinfimizer}. Moreover, the expression in \eqref{eq99} is a convex polyhedron.
\end{proposition}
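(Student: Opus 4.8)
The plan is to prove both claims using two structural facts from earlier. By Proposition~\ref{constantrecessioncone} all sets $F(x)$, $x\in\dom F$, and all sets $G(x)$, $x\in\dom G$, share the common recession cone $D\colonequals G(0)$, which is a convex cone. By Proposition~\ref{prop_poly_conv} the set $A\colonequals\conv\bigcup_{x\in\bar X}F(x)$ is a convex polyhedron with $0^+A=D$. The only further ingredient I would use is the elementary identity $A+0^+A=A$, valid for any nonempty closed convex set and in particular for the polyhedron $A$; it allows directions in $D$ to be absorbed into $A$ without changing it.

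For the equivalence I would first observe that the case $0\notin\hat X$ is trivial, so only $0\in\hat X$ needs attention. Writing $T\colonequals\bigcup_{x\in\hat X\setminus\{0\}}G(x)$, a short computation using that $D=G(0)$ is a convex cone gives $\cone(T\cup D)=\cone T+D$, hence
\begin{equation*}
	\cone\bigcup_{x\in\hat X}G(x)=\cone\bigcup_{x\in\hat X\setminus\{0\}}G(x)+D.
\end{equation*}
Adding $A+C$ to both sides and invoking $A+D=A$, I expect the right-hand side of \eqref{finiteinfimizer} for $\hat X$ to coincide, as a set, with that for $\hat X\setminus\{0\}$, which makes the two inclusions equivalent.

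For the polyhedrality of \eqref{eq99} I would apply Proposition~\ref{prop_poly_conv} to the polyhedra $G(x)$, $x\in\hat X$, all having recession cone $D$, to conclude that
\begin{equation*}
	\cl\cone\bigcup_{x\in\hat X}G(x)=\cone\bigcup_{x\in\hat X}G(x)+D
\end{equation*}
is a convex polyhedron. Using $A+D=A$ once more, I would rewrite \eqref{eq99} as
\begin{equation*}
	A+\cone\bigcup_{x\in\hat X}G(x)=A+\Bigl(\cone\bigcup_{x\in\hat X}G(x)+D\Bigr)=A+\cl\cone\bigcup_{x\in\hat X}G(x),
\end{equation*}
a Minkowski sum of two convex polyhedra, which is a convex polyhedron.

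The main obstacle is that $\cone\bigcup_{x\in\hat X}G(x)$ need not be closed, let alone polyhedral, as the example following Proposition~\ref{prop_poly_conv} illustrates, so its polyhedrality cannot be used directly. The crux of the argument is the observation that exactly the directions in $D=0^+A$ are missing to close this cone, and that these are recovered for free when forming the Minkowski sum with $A$; establishing $0^+A=D$ through Proposition~\ref{prop_poly_conv} is therefore the step on which both parts rest.
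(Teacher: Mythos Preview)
Your proposal is correct and follows essentially the same route as the paper: both arguments hinge on Propositions~\ref{constantrecessioncone} and~\ref{prop_poly_conv} to get $0^{+}A=G(0)$, then use $A+G(0)=A$ to absorb the extra $G(0)$ arising from removing $0$ from $\hat X$ and to replace the non-closed cone by its polyhedral closure $\cone\bigcup_{x\in\hat X}G(x)+G(0)$. The only cosmetic difference is that the paper writes the key identity as $Q+G(0)=Q_0+G(0)$ rather than your slightly sharper $Q_0=Q+G(0)$, and you should note separately the trivial case $\hat X=\emptyset$, where \eqref{eq99} reduces to $A$.
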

\begin{proof}
We set 
$$ P\colonequals \conv\bigcup_{x \in \bar X} F(x),\quad  Q\colonequals \cone \bigcup_{x \in \hat X\setminus\{0\}} G(x),\quad  Q_0\colonequals \cone \bigcup_{x \in \hat X} G(x).$$
Propositions \ref{constantrecessioncone} and \ref{prop_poly_conv} yield $P = P + G(0)$. Moreover, we have $Q+G(0)=Q_0+G(0)$. Thus $P + Q = P + G(0) + Q = P + G(0) + Q_0 = P + Q_0$, which yields the first claim. 
By Propositions \ref{constantrecessioncone} and \ref{prop_poly_conv}, $P$ and $G(0)+Q_0$ are convex polyhedra, hence $P+Q$ is a convex polyhedron.
\end{proof}

Note that in Definition \ref{def_inf} the set $\hat{X}$ is allowed to be empty. In this case, problem \eqref{P} is bounded and we obtain the solution concept introduced in \cite{paper,paper1} as a particular case. If the graph of $F$ is of the form 
$$  \gr F \colonequals \{(x,y) \in \R^n\times \R^q \mid y = Px,\; Ax \leq b\},$$
then Definition \ref{def_inf} describes a finite infimizer (as introduced in \cite{loehne2011}) for the vector linear program
\begin{equation}\label{VLP}
	\text{minimize} \hspace{0.1cm} P x \hspace{0.3cm} \text{with respect to}  \hspace{0.2cm} \leq_C \hspace{0.2cm} \text{subject to} \hspace{0.2cm} A x \leq b,
	\tag{VLP}
\end{equation}
where $P \in \R^{q \times n}$, $A \in \R^{m \times n}$, $b \in \R^m$ and $C$ is defined as in \eqref{coneChrep}. For $q=1$, $\bar X$ can be chosen as a singleton set and $\hat X$ can be chosen to be empty. This is the particular case of a linear program.

As a consequence of Proposition \ref{infpolyhedron}, we have
\begin{equation}\label{eq_33}
	 0^+\P = \bigcup_{x \in \mathbb R^n}G(x)+C,
\end{equation}
This shows that elements in $0^+ \P$ not belonging to $C$ can be expressed by values of the recession mapping $G$. Such elements occur exactly in the unbounded case. This motivates the usage of the recession mapping in Definition \ref{def_inf}, where the concept of a finite infimizer is extended to not necessarily bounded problems.

The next proposition provides a recursive definition of a finite infimizer. In particular, it is shown that $0^+\P$ is generated by the direction part $\hat X$ of a finite infimizer.

\begin{proposition}
	A tuple $(\bar{X},\hat{X}$) of finite sets $\bar{X} \subseteq \dom F$, $\bar{X} \neq \emptyset$ and $\hat{X} \subseteq \dom G \setminus \{0\}$, $\hat{X} \neq \emptyset$ is a finite infimizer for problem \eqref{P} if and only if	
\begin{equation}\label{finiteinfimizer3}
	\P = \conv \bigcup_{x \in \bar X} F(x) + 0^+\P \qquad\text{and}\qquad 0^+\P = \cl\cone \bigcup_{x \in \hat X} G(x) + C.
\end{equation}
\end{proposition}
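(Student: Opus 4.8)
The plan is to work with the equivalent equality form \eqref{finiteinfimizer2} of the finite infimizer condition, namely $\P = P + Q + C$, where I abbreviate $P \colonequals \conv\bigcup_{x\in\bar X} F(x)$ and $Q \colonequals \cone\bigcup_{x\in\hat X} G(x)$. First I would collect the structural facts supplied by the earlier results. By Proposition~\ref{constantrecessioncone} we have $0^{+}[F(x)] = G(0)$ for every $x \in \dom F$ and $0^{+}[G(x)] = G(0)$ for every $x \in \dom G$; hence Proposition~\ref{prop_poly_conv}, applied with $D = G(0)$, shows that $P$ is a convex polyhedron with $0^{+}P = G(0)$, that $\tilde Q \colonequals \cl\cone\bigcup_{x\in\hat X} G(x) = Q + G(0)$ is a polyhedral cone by \eqref{eq22}, and that $P + \tilde Q$ is a convex polyhedron as a Minkowski sum of a polyhedron and a polyhedral cone. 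The two identities I will lean on throughout are $P + G(0) = P$ (since $G(0) = 0^{+}P$) and $\tilde Q = Q + G(0)$, which together give the rewriting
\begin{equation*}
 P + Q + C = P + G(0) + Q + C = P + \tilde Q + C.
\end{equation*}
Thus the finite infimizer condition is equivalent to the polyhedral equality $\P = P + \tilde Q + C$, in which the possibly non-closed cone $Q$ has been eliminated in favour of the polyhedral cone $\tilde Q$.

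For the forward implication I would assume $\P = P + \tilde Q + C$ and simply take recession cones. Since $\tilde Q + C$ is a sum of two polyhedral cones it is itself a polyhedral cone, so $0^{+}(\tilde Q + C) = \tilde Q + C$; then the Minkowski sum formula \eqref{eq_recc} together with $0^{+}P = G(0) \subseteq \tilde Q$ gives
\begin{equation*}
 0^{+}\P = 0^{+}P + 0^{+}(\tilde Q + C) = G(0) + \tilde Q + C = \tilde Q + C = \cl\cone\bigcup_{x\in\hat X} G(x) + C,
\end{equation*}
which is the second identity in \eqref{finiteinfimizer3}. Substituting $0^{+}\P = \tilde Q + C$ back into the assumed equality yields $\P = P + 0^{+}\P$, the first identity in \eqref{finiteinfimizer3}.

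For the converse I would assume both identities in \eqref{finiteinfimizer3} and run the computation in reverse: the first gives $\P = P + 0^{+}\P$, into which the second substitutes to produce $\P = P + \tilde Q + C = P + Q + C$ after undoing the rewriting from the first paragraph. By the equivalence of \eqref{finiteinfimizer} and \eqref{finiteinfimizer2} this is exactly the assertion that $(\bar X, \hat X)$ is a finite infimizer. I do not anticipate a serious obstacle here; the one point demanding care is the non-closedness of $Q = \cone\bigcup_{x\in\hat X} G(x)$, which forces one to pass to its closure $\tilde Q$ via \eqref{eq22} rather than computing with $Q$ directly, and to keep track of the absorptions $P + G(0) = P$ and $G(0) + \tilde Q = \tilde Q$ so that the spurious term $G(0)$ never survives in the final expressions.
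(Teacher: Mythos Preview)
Your proof is correct and follows essentially the same approach as the paper: both arguments pass from the possibly non-closed cone $Q$ to its closure $\tilde Q = Q + G(0)$ via Proposition~\ref{prop_poly_conv}, then take recession cones using \eqref{eq_recc} together with the absorption $G(0) \subseteq \tilde Q$ to obtain $0^{+}\P = \tilde Q + C$, and finally substitute back. The only cosmetic difference is that the paper cites Proposition~\ref{no_zero} for the equivalence $\P = P + Q + C \iff \P = P + \tilde Q + C$, whereas you inline that step directly via $P = P + G(0)$; the underlying computation is identical.
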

\begin{proof} 
	We set 
	$$Q\colonequals \conv \bigcup_{x \in \bar X} F(x) \quad\text{ and }\quad R\colonequals \cone \bigcup_{x \in \hat X} G(x).$$
	Then condition \eqref{finiteinfimizer2}, which is shown to be equivalent to \eqref{finiteinfimizer3}, can be written as $\P = Q + R + C$. By Proposition~\ref{no_zero}, \eqref{finiteinfimizer2} is equivalent to $\P = Q + \cl R + C$. Thus \eqref{finiteinfimizer3} implies \eqref{finiteinfimizer2}.
	
	Now let $\P = Q + \cl R + C$ be satisfied. Since $\hat X \neq \emptyset$, Propositions \ref{constantrecessioncone} and \ref{prop_poly_conv} yield  $\cl R = R+G(0)$ and $0^+ Q = G(0)$. Since $\cl R$ and $C$ are polyhedral convex cones, we obtain $0^+(\cl R +C) = \cl R + C \supseteq  G(0) = 0^+Q$. From \eqref{eq_recc} we obtain $0^+\P = 0^+ Q + 0^+ (\cl R+C)$. We deduce $0^+\P = \cl R + C$ and $\P = Q + 0^+\P$, i.e., \eqref{finiteinfimizer3} holds.
\end{proof}

\section{Existence and computation of finite infimizers}

The goal of this section is to show that a finite infimizer for the unbounded problem (P) always exists if the objective function is polyhedral convex and the upper image (or infimium, compare Remark \ref{rem0}) $\P$ contains no line. We also show that a finite infimizer can be computed by solving a {\em polyhedral projection problem}, or alternatively, a {\em vector linear program}. Note that these problems are equivalent, see \cite{pp}. 

Let the graph of $F$ be given by a P-representation
\begin{align}{\label{prepgr}}
    \gr F=\{(x,y) \in \mathbb R^{n \times q} \mid \exists u \in \mathbb R^p: Ax+By+Qu \geq b\},
\end{align}
where $A \in \mathbb R^{m \times n}$, $B \in \mathbb R^{m \times q}$, $Q \in \mathbb R^{m \times p}$ and $b \in \mathbb R^m$. Of course, such a P-representation always exists. Note that in \cite{paper,paper1} (where in contrast to this article only bounded problems were studied), an H-representation of $\gr F$ was expected to be known. The P-representation \eqref{prepgr} is more general. Thus the results in \cite{paper,paper1} are extended here even for the bounded case.

The graph of the recession mapping $G=0^+F$ of $F$ can be expressed as
\begin{align}{\label{graphG}}
    \gr G=\{(x,y)\in \mathbb R^{n \times q} \mid \exists u \in \mathbb R^p: Ax+By+Qu \geq 0\}.
\end{align}

By Proposition~\ref{infpolyhedron} and by Assumption~\ref{ass1}, $\P=\bigcup_{x \in \mathbb R^n}F(x)+C$ is a pointed convex polyhedron. Thus there is a V-representation
\begin{align}{\label{vreptildep}}
    \P=\conv\{v_1,...,v_k\}+\cone\{d_1,...,d_l\},
\end{align}
where $v_1,...,v_k$ are the vertices and $d_1,...,d_l$ are the extremal directions of $\P$. Without loss of generality we assume that 
$d_1,\ldots,d_t \not\in C$ and $d_{t+1},\ldots,d_l \in C$ for $t \in \{0,\ldots,l\}$.
Then we obtain the representation
\begin{align}{\label{vreptildep1}}
    \P=\conv\{v_1,...,v_k\}+\cone\{d_1,...,d_t\}+C,
\end{align}
We define the sets
\begin{align}
   \label{fininf1} &\bar{X}\colonequals\{\bar{x}_1,...,\bar{x}_k\} \hspace{0.2cm} \textit{with} \hspace{0.2cm} \bar{x}_i \in F^{-1}(v_i), \;  i \in \{1,...,k\}\\
   \label{fininf2} &\hat{X}\colonequals\{\hat{x}_1,...,\hat{x}_t\} \hspace{0.2cm} \textit{with} \hspace{0.2cm} \hat{x}_j \in G^{-1}(d_j), \;  j \in \{1,...,t\}. 
\end{align}
We prove that the sets $\bar{X}$, $\hat{X}$ are well-defined and that the tuple $(\bar{X},\hat{X}\setminus\{0\})$ provides a finite infimizer for problem \eqref{P}. 

\begin{proposition}{\label{minpointlemma}}
There exist sets $\bar X$ and $\hat X$ satisfying \eqref{fininf1} and \eqref{fininf2}.
\end{proposition}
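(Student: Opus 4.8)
The assertion is precisely that, for every vertex $v_i$ and every out-of-cone extremal direction $d_j$ of $\P$, the preimages $F^{-1}(v_i)$ and $G^{-1}(d_j)$ are nonempty, so that representatives $\bar x_i$ and $\hat x_j$ can be selected; the inclusions $\bar X\subseteq\dom F$ and $\hat X\subseteq\dom G\setminus\{0\}$ then follow automatically. Equivalently, I must show $v_i\in\Q=\bigcup_{x}F(x)$ for $i\in\{1,\dots,k\}$ and $d_j\in\bigcup_{x}G(x)$ for $j\in\{1,\dots,t\}$.

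For the vertices I would use the Minkowski decomposition $\P=\Q+C$. Fix a vertex $v=v_i$ and write $v=q+c$ with $q\in\Q$ and $c\in C$. If $c\neq 0$, then $q$ and $q+2c$ are two distinct points of $\P$ (here $2c\in C$ because $C$ is a cone), and $v=\tfrac12 q+\tfrac12(q+2c)$ exhibits $v$ as a proper convex combination of distinct points of $\P$, contradicting that $v$ is a vertex. Hence $c=0$ and $v=q\in\Q$, so $F^{-1}(v_i)\neq\emptyset$. This mirrors the argument available in the bounded case of \cite{paper,paper1}.

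For the extremal directions I would run the same argument one level up, on recession cones, which is the genuinely new part. By \eqref{eq_33} we have $0^+\P=\bigcup_{x}G(x)+C$, and by Proposition~\ref{infpolyhedron} the set $\bigcup_{x}G(x)=0^+\Q$ is a polyhedral convex cone. Fix an extremal direction $d=d_j$ with $d\notin C$ and decompose $d=g+c$ with $g\in\bigcup_{x}G(x)$ and $c\in C$. Since both $\bigcup_{x}G(x)$ and $C$ are contained in $0^+\P$, the elements $g$ and $c$ lie in $0^+\P$; extremality of $d$ then forces $g=\alpha d$ and $c=\beta d$ with $\alpha,\beta\geq 0$, and comparing with $g+c=d$ (and $d\neq 0$) gives $\alpha+\beta=1$. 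If $\beta>0$ then $d=\tfrac1\beta c\in C$, contradicting $d\notin C$; hence $\beta=0$ and $d=g\in\bigcup_{x}G(x)$, so $G^{-1}(d_j)\neq\emptyset$.

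The first part is routine. The step I expect to require the most care is the direction part: it is exactly here that the separation $d_1,\dots,d_t\notin C$ arranged before \eqref{vreptildep1} is used, and one must correctly transfer extremality of $d_j$ as an extreme ray of $0^+\P$ through the Minkowski decomposition $0^+\P=\bigcup_x G(x)+C$ to conclude that the whole direction — not merely some ray lying inside that sum — is attained by the recession mapping $G$.
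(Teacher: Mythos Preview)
Your proof is correct and follows essentially the same approach as the paper's own argument: decompose a vertex $v=q+c\in\Q+C$ and use the convex combination $v=\tfrac12 q+\tfrac12(q+2c)$ to force $c=0$, and likewise decompose an extremal direction $d=g+c\in\bigcup_x G(x)+C=0^+\P$ and use extremality together with $d\notin C$ to force $c=0$. The only minor inaccuracy is your parenthetical claim that $\hat X\subseteq\dom G\setminus\{0\}$ follows automatically; in fact some $\hat x_j$ could be $0$, which is why the paper passes to $\hat X\setminus\{0\}$ in Theorem~\ref{existencefiniteinfimizer}---but this is not part of the present proposition.
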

\begin{proof}
	(i) Let $v$ be a vertex of $\P$. Then $v=y+c$ for $y \in \bigcup_{x \in \R^n}F(x)$ and $c \in C$. If $c \neq 0$, $v$ is a nontrivial convex combination of $y \in \P$ and $y+2c \in \P$, which contradicts $v$ being a vertex of $\P$. Thus we have $v \in \bigcup_{x \in \R^n}F(x)$ which implies $F^{-1}(v) \neq \emptyset$. 

(ii) Let $d \in 0^+\P \setminus C$ be an extremal direction of $\P$. By \eqref{eq_33} we obtain $d=y+c$ for 
$y \in \bigcup_{x \in \R^n} G(x) \subseteq 0^+\P$ and $c \in C \subseteq 0^+\P$. From the definition of an extremal direction we obtain $c \in \cone\{d\}$. Since $d \notin C$ we conclude $c=0$. Thus we have $d \in \bigcup_{x \in \R^n}G(x)$ which implies $G^{-1}(d) \neq \emptyset$. 
\end{proof}

\begin{theorem}{\label{existencefiniteinfimizer}}
Let Assumption \ref{ass1} be fulfilled for problem \eqref{P}. Then the tuple $(\bar{X},\hat{X}\setminus\{0\})$ as defined in \eqref{fininf1} and \eqref{fininf2} is a finite infimizer for \eqref{P}.
\end{theorem}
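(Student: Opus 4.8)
The plan is to verify the equivalent reformulation \eqref{finiteinfimizer2} of the finite infimizer condition, namely the set equality $\P = \conv \bigcup_{x \in \bar X} F(x) + \cone \bigcup_{x \in \hat X} G(x) + C$, and then to invoke Proposition~\ref{no_zero} to conclude that the same holds with $\hat X$ replaced by $\hat X \setminus \{0\}$. Proposition~\ref{minpointlemma} already ensures that $\bar X$ and $\hat X$ are well defined, so that $v_i \in F(\bar x_i)$ for all $i$ and $d_j \in G(\hat x_j)$ for all $j$. I would then prove the two inclusions separately.

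For the inclusion $\P \subseteq \conv \bigcup_{x \in \bar X} F(x) + \cone \bigcup_{x \in \hat X} G(x) + C$, I would start from the refined V-representation \eqref{vreptildep1}. Since each vertex satisfies $v_i \in F(\bar x_i) \subseteq \bigcup_{x \in \bar X} F(x)$ and each direction satisfies $d_j \in G(\hat x_j) \subseteq \bigcup_{x \in \hat X} G(x)$, passing to convex and conic hulls gives $\conv\{v_1,\dots,v_k\} \subseteq \conv \bigcup_{x \in \bar X} F(x)$ and $\cone\{d_1,\dots,d_t\} \subseteq \cone \bigcup_{x \in \hat X} G(x)$. Adding $C$ and using \eqref{vreptildep1} yields the desired inclusion.

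For the reverse inclusion, I would locate each summand of the right-hand side inside $\P$ or $0^+\P$. Each $F(\bar x_i)$ is contained in $\Q \subseteq \P$, so convexity of $\P$ gives $\conv \bigcup_{x \in \bar X} F(x) \subseteq \P$. By \eqref{eq_33} the union $\bigcup_{x \in \R^n} G(x)$ lies in $0^+\P$, which is a convex cone containing $C$; hence $\cone \bigcup_{x \in \hat X} G(x) + C \subseteq 0^+\P$. Combining these with the identity $\P + 0^+\P = \P$, valid since $\P$ is a closed convex set, gives the inclusion $\supseteq$ and thus the equality.

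The argument is mostly a bookkeeping of containments rather than a hard analytic step; the point requiring the most care is that the conic part $\cone \bigcup_{x \in \hat X} G(x)$ need not be closed or even polyhedral on its own, so I would never take its closure directly. Instead, the identity $\P + 0^+\P = \P$ together with \eqref{eq_33} and Proposition~\ref{no_zero} absorbs all closure and $0$-direction issues, which is exactly why the reformulation \eqref{finiteinfimizer2} can be established as a clean set equality.
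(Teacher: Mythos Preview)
Your proof is correct and follows essentially the same route as the paper: both rely on the refined V-representation \eqref{vreptildep1}, the well-definedness from Proposition~\ref{minpointlemma}, and Proposition~\ref{no_zero} to strip out $0$ from $\hat X$. The only differences are cosmetic: the paper verifies directly the inclusion \eqref{finiteinfimizer} (so your reverse inclusion, while correct, is unnecessary), and the paper makes explicit the check $\bar X \neq \emptyset$ required by Definition~\ref{def_inf}, which in your argument is implicit in the use of \eqref{vreptildep1} together with Assumption~\ref{ass1}.
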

\begin{proof}
By Assumption \ref{ass1}, $\P$ has a vertex. Thus we have $\bar X \neq \emptyset$. By \eqref{vreptildep1}, \eqref{fininf1} and \eqref{fininf2} we obtain that \eqref{finiteinfimizer} holds for $\bar X$ and $\hat X$. The claim now follows from Proposition \ref{no_zero}.
\end{proof}


If the vertices and extremal directions of $\P$ are known, a finite infimizer for \eqref{P} can be obtained by solving finitely many linear programs. For instance, for $v$ running over the vertices of $\P$, the finitely many feasibility problems
\begin{align}{\label{fp}}
 \min_{x \in \mathbb R^n} 0^Tx \hspace{0.2cm} \textit{s.t.} \hspace{0.2cm} x \in F^{-1}(v)
\end{align}
yield the set $\bar X$. Since the condition $x \in F^{-1}(v)$ in \eqref{fp} can be written equivalently as $Ax+Qu \geq b-Bv$ with an auxiliary variable $u \in \mathbb R^p$, \eqref{fp} is a linear program with variables $x$ and $u$. The elements of $\hat X$ can be obtained analogously by using the mapping $G$ instead of $F$.

In \cite{paper,paper1}, a finite infimizer for the bounded polyhedral convex set-valued optimization problem was obtained by solving an associated vector linear program. This approach also works for unbounded problems.
 
Consider the projection $f: \mathbb R^{n \times q} \to \mathbb R^q$ with $f(x,y) \colonequals y$. The function $f$ can be understood as a set-valued map whose values are singleton sets, i.e.
\begin{align*}
	f: \mathbb R^{n \times q} \rightrightarrows \mathbb R^q, \hspace{0.3cm} f(x,y)=\{y\}.
\end{align*}
As in \cite{paper,paper1} we define the {\em vectorial relaxation} of \eqref{P} as
\begin{align*}
	\min_{x,y} f(x,y) \hspace{0.2cm} \text{w.r.t.} \leq_C \hspace{0.2cm} \text{s.t.} \hspace{0.2cm} y \in F(x) \tag{VR}\label{VR}
\end{align*}
A finite infimizer for the vector linear program \eqref{VR} coincides with a finite infimizer in the sense of Definition \ref{def_inf} above of the polyhedral convex set optimization problem \eqref{P} if the objective mapping $F$ in \eqref{P} is replaced by $\tilde F:\R^n \times \R^q \rightrightarrows \R^q$, defined by
$$ \gr \tilde F \colonequals \{ (x,y,w) \in \R^n\times\R^q\times\R^q \mid w =y,\; (x,y) \in \gr F \}.$$
We have $\tilde F(x,y)=\{y\}$ if $(x,y)\in \gr F$ and $\tilde F(x,y)=\emptyset$ otherwise. Thus
\begin{align}{\label{VRinfimum}}
  \bigcup_{x \in \R^n} F(x)+C = \bigcup_{x \in \mathbb R^n \atop y \in F(x)} \{y\}+C=\bigcup_{x \in \R^n \atop y \in \R^q} \tilde F(x,y) + C.
\end{align}
Of course the graph of the recession mapping $\tilde G \colonequals 0^+ \tilde F$ is 
$$ \gr \tilde G \colonequals \{ (x,y,w) \in \R^n\times\R^q\times\R^q \mid w =y,\; (x,y) \in \gr G \}.$$
We have $\tilde G(x,y)=\{y\}$ if $(x,y)\in \gr G$ and $\tilde G(x,y)=\emptyset$ otherwise.

\begin{theorem}{\label{VRsolutioninfimizer}}
Let $(\bar{Z},\hat Z)$ with 
\begin{align*}
    \bar{Z}=\bigg\{\begin{pmatrix} \bar{x}_1\\ \bar{y}_1\end{pmatrix},\ldots,\begin{pmatrix} \bar{x}_k \\ \bar{y}_k\end{pmatrix}\bigg\} \subseteq \R^n \times \R^q,
\end{align*}
\begin{align*}
    \hat{Z}=\bigg\{\begin{pmatrix} \hat{x}_1\\ \hat{y}_1\end{pmatrix},\ldots,\begin{pmatrix} \hat{x}_l \\ \hat{y}_l\end{pmatrix}\bigg\} \subseteq (\R^n \times \R^q) \setminus \{0\}.
\end{align*}
be a finite infimizer for the vectorial relaxation \eqref{VR} of \eqref{P}.
Then $(\bar X,\hat X)$ with  
$$\bar{X}\colonequals\{\bar{x}_1,\ldots,\bar{x}_k\} \quad \text{and} \quad \hat{X} \colonequals \{\hat{x}_1,\ldots,\hat{x}_l\}\setminus \{0\}$$
is a finite infimizer for problem (P).
\end{theorem}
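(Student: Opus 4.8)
The plan is to unravel what it means for $(\bar Z, \hat Z)$ to be a finite infimizer of \eqref{VR} and to compare it, inclusion by inclusion, with the defining inclusion \eqref{finiteinfimizer} of a finite infimizer of \eqref{P}. First I would apply Definition~\ref{def_inf} to the objective map $\tilde F$ and its recession map $\tilde G$. Since $\bar Z \subseteq \dom \tilde F = \gr F$ and $\hat Z \subseteq \dom \tilde G = \gr G$, we have $\bar y_i \in F(\bar x_i)$ and $\hat y_j \in G(\hat x_j)$ for all $i,j$. Using $\tilde F(x,y) = \{y\}$ on $\gr F$ and $\tilde G(x,y)=\{y\}$ on $\gr G$, together with \eqref{VRinfimum}, the infimizer condition for \eqref{VR} becomes the concrete inclusion
\begin{equation*}
\Q \subseteq \conv\{\bar y_1, \ldots, \bar y_k\} + \cone\{\hat y_1, \ldots, \hat y_l\} + C ,
\end{equation*}
where $\Q = \bigcup_{x \in \R^n} F(x)$.

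It then suffices to show that the right-hand side above is contained in $\conv \bigcup_{x \in \bar X} F(x) + \cone \bigcup_{x \in \hat X} G(x) + C$, since this gives \eqref{finiteinfimizer} for $(\bar X, \hat X)$. The convex-hull term is immediate: from $\bar y_i \in F(\bar x_i)$ we get $\conv\{\bar y_1, \ldots, \bar y_k\} \subseteq \conv \bigcup_{x \in \bar X} F(x)$. For the conic term I would split the indices into $J_0 = \{j : \hat x_j = 0\}$ and $J_1 = \{j : \hat x_j \neq 0\}$ and use the elementary identity $\cone(A \cup B) = \cone A + \cone B$ (valid also with the convention $\cone \emptyset = \{0\}$) to write
\begin{equation*}
\cone\{\hat y_1, \ldots, \hat y_l\} = \cone\{\hat y_j : j \in J_0\} + \cone\{\hat y_j : j \in J_1\} .
\end{equation*}

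The main point — and the only place where the removal of $0$ in the definition of $\hat X$ must be justified — is the treatment of the indices in $J_0$. For $j \in J_0$ the pair $(0, \hat y_j)$ lies in $\gr G$, so $\hat y_j \in G(0)$; by Proposition~\ref{constantrecessioncone} we have $G(0) = 0^+[F(x)]$ for every $x \in \dom F$, and by Proposition~\ref{prop_poly_conv} the convex polyhedron $\conv \bigcup_{x \in \bar X} F(x)$ has recession cone $G(0)$. Hence $\cone\{\hat y_j : j \in J_0\} \subseteq G(0)$ is absorbed by the point part, and dropping these directions from $\hat X$ costs nothing. For $j \in J_1$ we have $\hat x_j \in \hat X$ and $\hat y_j \in G(\hat x_j) \subseteq \bigcup_{x \in \hat X} G(x)$, so $\cone\{\hat y_j : j \in J_1\} \subseteq \cone \bigcup_{x \in \hat X} G(x)$.

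Combining these inclusions and using $\conv \bigcup_{x \in \bar X} F(x) + G(0) = \conv \bigcup_{x \in \bar X} F(x)$ yields the desired containment of the right-hand side, and therefore \eqref{finiteinfimizer} for $(\bar X, \hat X)$. Finally I would record the (routine) admissibility conditions: $\bar X$ is finite, nonempty (as $\bar Z \neq \emptyset$) and contained in $\dom F$ because $\bar y_i \in F(\bar x_i)$, while $\hat X$ is finite and contained in $\dom G \setminus \{0\}$ because $\hat y_j \in G(\hat x_j)$ forces $\hat x_j \in \dom G$; the case $\hat X = \emptyset$ (i.e. $J_1 = \emptyset$) is permitted. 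I expect the bookkeeping around $J_0$ — verifying that the directions attached to $\hat x_j = 0$ are genuine recession directions of the objective and therefore already captured by the point part — to be the one genuinely nontrivial step; everything else is a direct translation of the definitions of $\tilde F$ and $\tilde G$.
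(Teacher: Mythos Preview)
Your argument is correct and follows essentially the same route as the paper: unpack the infimizer condition for \eqref{VR} via $\tilde F,\tilde G$, use $\bar y_i\in F(\bar x_i)$ and $\hat y_j\in G(\hat x_j)$ to enlarge the point and direction parts, and check admissibility. The only difference is cosmetic: where the paper first obtains the infimizer condition for the tuple $(\bar X,\{\hat x_1,\dots,\hat x_l\})$ and then invokes Proposition~\ref{no_zero} to drop the zeros, you instead reprove that proposition inline by splitting $J_0/J_1$ and absorbing $G(0)$ into the recession cone of $\conv\bigcup_{x\in\bar X}F(x)$ via Propositions~\ref{constantrecessioncone} and~\ref{prop_poly_conv}.
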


\begin{proof}
Let $(\bar{Z},\hat{Z})$ be a finite infimizer for \eqref{VR}. Thus $\bar Z \neq \emptyset$, $\bar Z \subseteq \dom \tilde F$ and $\hat Z \subseteq \dom \tilde G\setminus \{0\}$. For $(\bar x_i,\bar y_i) \in \bar Z$, we have $\tilde F(\bar x_i,\bar y_i)= \{\bar y_i\}$ and for $(\hat x_j,\hat y_j) \in \hat Z$, we have $\tilde G(\hat x_j,\hat y_j)= \{\hat y_j\}$. From \eqref{finiteinfimizer2} we conclude
$$ \bigcup_{x \in \R^n \atop y \in \R^q} \tilde F(x,y) + C = \conv\{\bar{y}_1,\ldots,\bar{y}_k\}+\cone\{\hat{y}_1,\ldots,\hat{y}_l\}+C.$$
It is easy to verify that $\bar X \neq \emptyset$, $\bar X \subseteq \dom F$ and $\hat X \subseteq \dom G\setminus \{0\}$.
Using \eqref{VRinfimum} and the facts that $\bar y_i \in F(\bar x_i)$, $i \in \{1,\ldots,k\}$ and $\hat y_j \in G(\hat x_j)$, $j \in \{1,\ldots,l\}$ we obtain
\begin{align*}
    \bigcup_{x \in \mathbb R^n} F(x) = \conv \bigcup_{i=1}^k F(\bar x_i) + \cone \bigcup_{j=1}^l G(\hat x_j)+C.
\end{align*}
By Proposition \ref{no_zero}, $(\bar X,\hat X)$ is a finite infimizer for \eqref{P}.
\end{proof}

We conclude that a finite infimizer for problem (P) can be obtained by solving the vector linear program \eqref{VR} and thus can be realized by a vector linear program solver like {\em bensolve} \cite{bensolve1, bensolve2}. We close this section by a remark.

\begin{remark}\label{rem4} The objective mapping $F:\R^n\rightrightarrows \R^q$ can be replaced by 
	$$ F_C :\R^n\rightrightarrows \R^q,\quad F_C(x)\colonequals F(x)+C.$$
	A P-representation of $\gr F_C$ can be easily obtained from a P-representation of $\gr F$ as we have 
	$$ \gr F_C = \gr F + (\{0\}\times C).$$	
	By \eqref{eq_recc}, we obtain the recession mapping $G_C$ of $F_C$ as
	$$ G_C :\R^n\rightrightarrows \R^q,\quad G_C(x)\colonequals G(x)+C.$$ 
	The concepts and results of this exposition can be reformulated by replacing 
 $F$ $G$ and $\preccurlyeq_C$ by $F_C$, $G_C$ and $\supseteq$, respectively.	
\end{remark}

\section{Solution concepts}

In order to make an infimizer a {\em solution} of a polyhedral convex set optimization problem, we need to define minimizers. Since an infimizer is a tuple of a set of points and a set of directions, we introduce likewise two types of minimizers: {\em minimizing points} and {\em minimizing directions}. 
We discuss here three variants of solution concepts. 

The first variant is in all aspects based on the ordering relation induced by the cone $C$.

\begin{definition}\label{def_min_sol_0}
	An element $\bar{x} \in \dom F$ is called a {\em minimizing point} for \eqref{P} if 
	\begin{align*}
		F(x) \preccurlyeq_C F(\bar x) \quad \Rightarrow \quad F(x) + C = F(\bar x) + C.
	\end{align*}
	A nonzero element $\hat{x} \in \dom G$ is called a {\em minimizing direction} for \eqref{P} if
	\begin{align*}
		G(x) \preccurlyeq_C G(\hat{x}) \quad \Rightarrow \quad G(x) + C = G(\hat{x}) +  C.
	\end{align*}
	A finite infimizer $(\bar{X},\hat{X})$ is called a {\em solution} to \eqref{P} if all elements of  $\bar{X}$ are minimizing points and all elements of $\hat{X}$ (if any) are minimizing directions.
\end{definition}

Several aspects of our solution concepts can be explained by the following example. 

\begin{example}\label{ex5}
	Let $C=\R^2_+$ and let $F: \R \rightrightarrows \R^2$ be defined by 
		$$ \gr F = \cone \left\{ \begin{pmatrix}1 \\0 \\ 0 \end{pmatrix},\; \begin{pmatrix} \phantom{-}1 \\ \phantom{-}2 \\-1 \end{pmatrix} \right\}.$$
		Then for $x \in \dom F = \R_+$ we have
		$$ F(x) = \conv\left\{ \begin{pmatrix} 0 \\ 0 \end{pmatrix},\; \begin{pmatrix} 2 x \\-x \end{pmatrix} \right\}.$$
		Since $\gr F$ is a cone, we have $F=G$.
\end{example}

We easily see that there is no minimal point and hence no solution in the sense of Definition \ref{def_min_sol_0} for Example \ref{ex5}. This is remarkable insofar as in vector linear programming existence of a solution already follows from Assumption \ref{ass1}, which is satisfied in Example \ref{ex5}.

Condition \eqref{finiteinfimizer3} motivates another definition of minimizing points by using the ordering $\preccurlyeq_{0^+\P}$ and minimizing directions by using the ordering $\preccurlyeq_C$. 
For a vector ordering, minimality with respect to the (``bigger'') cone $0^+\P$ implies minimality with respect to the (``smaller'') cone $C \subseteq 0^+\P$. This is not necessarily true for the set ordering we use here. This means that in vector linear programming the vector ordering induced by $C$ can be replaced by the vector ordering induced by $0^+\P$ without loosing the property of minimality with respect to the original cone $C$. Moreover, in vector linear programming, under Assumption \ref{ass1}, a solution based on $0^+\P$-minimizing points and $C$-minimizing directions always exists. Thus the following definition can be seen as another possibility to generalize the solution concept of vector linear programming to the set-valued case.

\begin{definition}\label{def_min_sol_1}
	An element $\bar{x} \in \dom F$ is called a {\em minimizing point} for \eqref{P} if 
	\begin{align*}
		F(x) \preccurlyeq_{0^+\P} F(\bar x) \quad \Rightarrow \quad F(x) + 0^+\P = F(\bar x) + 0^+\P.
	\end{align*}
	A nonzero element $\hat{x} \in \dom G$ is called a {\em minimizing direction} for \eqref{P} if
	\begin{align*}
		G(x) \preccurlyeq_C G(\hat{x}) \quad \Rightarrow \quad G(x) + C = G(\hat{x}) +  C.
	\end{align*}
	A finite infimizer $(\bar{X},\hat{X})$ is called a {\em solution} to \eqref{P} if all elements of  $\bar{X}$ are minimizing points and all elements of $\hat{X}$ (if any) are minimizing directions.
\end{definition}

With Definition \ref{def_min_sol_1}, $\bar x = 0$ is a minimizing point in Example \eqref{ex5} and for $\bar X = \{0\}$ we have
\begin{equation}\label{eq_1566}
	\P = \conv \bigcup_{x \in \bar X} F(x) + 0^+\P . 
\end{equation}	
More generally, under Assumption \ref{ass1}, there always exists a finite set $\bar X$ of minimizing points such that \eqref{eq_1566} holds. This follows from the existence result in \cite{paper} for bounded problems (using $0^+\P$ as ordering cone). While Definition \ref{def_min_sol_1} is adequate with respect to minimizing points, minimizing directions can cause problems. In Example \ref{ex5}, we have
	 $G(\alpha x) + C \supsetneq G(\beta x) +C$ whenever $\alpha > \beta$. Thus a minimizing direction as defined in Definition \ref{def_min_sol_1} does not exist. Consequently, also a solution in the sense of Definition \ref{def_min_sol_1} does not necessarily exist.

Let us discuss another variation of the solution concept. The second condition in \eqref{finiteinfimizer3} can be expressed equivalently as
	$$ 0^+\P = \cl\cone \bigcup_{x \in \hat X} \cl\cone G(x) + C.$$
This motivates the following definition.

\begin{definition}\label{def_min_sol_2}
	An element $\bar{x} \in \dom F$ is called a {\em minimizing point} for \eqref{P} if 
	\begin{align*}
		F(x) \preccurlyeq_{0^+\P} F(\bar x) \quad \Rightarrow \quad F(x) + 0^+\P = F(\bar x) + 0^+\P.
	\end{align*}
	A nonzero element $\hat{x} \in \dom G$ is called a {\em minimizing direction} for \eqref{P} if
	\begin{align*}
		\cl\cone G(x) \preccurlyeq_C \cl\cone G(\hat{x}) \quad \Rightarrow \quad \cl\cone G(x) + C = \cl\cone G(\hat{x}) +  C.
	\end{align*}
	A finite infimizer $(\bar{X},\hat{X})$ is called a {\em solution} to \eqref{P} if all elements of  $\bar{X}$ are minimizing points and all elements of $\hat{X}$ (if any) are minimizing directions.
\end{definition}

According to Definition \ref{def_min_sol_2}, $(\bar X,\hat X)=(\{0\},\{1\})$ is a solution of the problem in Example \ref{ex5}. However we find another example where a solution does not exist (even though Assumption \ref{ass1} is fulfilled).

\begin{example}\label{ex6}
	Let $C=\R^2_+$ and let $F: \R^2 \rightrightarrows \R^2$ be defined by 
		$$ \gr F = \cone \left\{ \begin{pmatrix}\phantom{-}1 \\\phantom{-}0 \\ \phantom{-}2 \\-1 \end{pmatrix},\;
		                          \begin{pmatrix} 1 \\ 0 \\0 \\ 0 \end{pmatrix},\;
								   \begin{pmatrix} \phantom{-}0 \\  \phantom{-}1 \\ -1 \\  \phantom{-}2 \end{pmatrix},\;
									\begin{pmatrix} 0 \\ 0 \\1 \\ 0 \end{pmatrix},\;
								     \begin{pmatrix} 0 \\ 0 \\0 \\ 1 \end{pmatrix}\right\}.$$
Since $\gr F$ is a cone, we have $F=G$. 
For $x \in \R^2$ of the form $x = x_\alpha = (\alpha,\;1-\alpha)^T$, $\alpha \geq 0$, we have
$$ G(x_\alpha)=\conv \left\{ \begin{pmatrix} \alpha-1 \\ 2-2\alpha \end{pmatrix},\; \begin{pmatrix} 3 \alpha -1 \\ 2-3\alpha \end{pmatrix}\right\} + C$$
For $0 \leq \alpha \leq \beta < 1$ we have 
$$ \cl\cone G(x_\beta) \preccurlyeq_C \cl\cone G(x_\alpha)$$
with
$$ \cl\cone G(x_\beta) + C \neq \cl\cone G(x_\alpha) + C$$
for sufficiently large $\alpha$.
Moreover, for all $0 \leq \alpha < 1$ we have
$$ \begin{pmatrix}
	-1 \\ \phantom{-}2
\end{pmatrix} \in \cl\cone G(x_\alpha) + C$$
but for $\alpha=1$
$$ \begin{pmatrix}
	-1 \\ \phantom{-}2
\end{pmatrix} \not\in \cl\cone G(x_\alpha)+ C.$$
Thus, there is no minimizing direction.
\end{example}

Another problem with Definition \ref{def_min_sol_2} is that the mapping $x \mapsto \cl\cone G(x)$ is not necessarily convex. A solution method analogous to the one in \cite{paper,paper1} is therefore not possible.

\section{Conclusions}

We defined finite infimizers for not necessarily bounded polyhedral convex set optimization problems. Finite infimizers can be computed by a reformulation of the problem into a vector linear program. This part works analogous to the bounded case in \cite{paper, paper1}.

We defined three types of minimizers and thus three types of solutions. All these variants led to problems with the existence of solutions. These problems are new in comparison with vector linear programming. We conclude that a generalization of the concepts and results of \cite{paper, paper1} to the unbounded case is more involved and requires further investigation. These studies could also touch the fundamental question on adequate requirements for a universal solution concept for set-valued optimization problems in the framework of the complete lattice approach. We believe that such a universal solution concept must involve a satisfactory treatment of the simplest 
subclasses of set optimization problems, in particular, of the polyhedral convex set optimization problem. One can accept that existence of solutions is more involved and not always possible in a set-valued framework (in contrast to a vector-valued framework). But one could also try to replace minimality by other properties which generalize the vector-valued case and additionally maintain the existence of solutions under analogous assumptions.

\vskip 6mm
\noindent{\bfseries Acknowledgments}
\vskip 2mm
\noindent   
The authors thank Frank Heyde and Andreas H. Hamel for a very fruitful discussion about the issues on solution concepts addressed in the last section. They also thank two anonymous reviewers for their valuable comments.


\begin{thebibliography}{10}

\bibitem{hamelheyde10}
A.~H. Hamel and F.~Heyde.
\newblock Duality for set-valued measures of risk.
\newblock {\em SIAM J. Financial Math.}, 1(1):66--95, 2010.

\bibitem{rathershortintr}
A.~H. Hamel, F.~Heyde, A.~L{\"o}hne, B.~Rudloff, and C.~Schrage.
\newblock Set optimization---a rather short introduction.
\newblock In A.~H. Hamel, F.~Heyde, A.~L{\"o}hne, B.~Rudloff, and C.~Schrage,
  editors, {\em Set Optimization and Applications - The State of the Art},
  pages 65--141, Berlin, Heidelberg, 2015. Springer Berlin Heidelberg.

\bibitem{riskmeasure}
A.~H. Hamel, F.~Heyde, and B.~Rudloff.
\newblock Set valued risk measures for conical market models.
\newblock {\em Mathematics and Financial Economics}, 5(1):1--28, 2011.

\bibitem{hamelkostner18}
A.~H. Hamel and D.~Kostner.
\newblock Cone distribution functions and quantiles for multivariate random
  variables.
\newblock {\em Journal of Multivariate Analysis}, 167:97 -- 113, 2018.

\bibitem{heydeloehne11}
F.~Heyde and A.~L\"ohne.
\newblock Solution concepts in vector optimization: a fresh look at an old
  story.
\newblock {\em Optimization}, 60(12):1421--1440, 2011.

\bibitem{lauritzen}
N.~{Lauritzen}.
\newblock {\em {Undergraduate convexity. From Fourier and Motzkin to Kuhn and
  Tucker.}}
\newblock Hackensack, NJ: World Scientific, 2013.

\bibitem{loehne2011}
A.~L{\"o}hne.
\newblock {\em {Vector Optimization with Infimum and Supremum.}}
\newblock {Springer}, 2011.

\bibitem{loehnerudloff14}
A.~L\"ohne and B.~Rudloff.
\newblock An algorithm for calculating the set of superhedging portfolios and
  strategies in markets with transaction costs.
\newblock {\em International Journal of Theoretical and Applied Finance},
  17(2), 2014.

\bibitem{paper}
A.~L{\"o}hne and C.~Schrage.
\newblock An algorithm to solve polyhedral convex set optimization problems.
\newblock {\em Optimization}, 62(1):131--141, 2013.

\bibitem{paper1}
A.~{L\"ohne} and C.~{Schrage}.
\newblock {Note: ``An algorithm to solve polyhedral convex set optimization
  problems''}.
\newblock {\em {Optimization}}, 64(9):2039--2041, 2015.

\bibitem{loehnetammer07}
A.~L{\"o}hne and C.~Tammer.
\newblock {A new approach to duality in vector optimization.}
\newblock {\em Optimization}, 56(1-2):221--239, 2007.

\bibitem{pp}
A.~L{\"o}hne and B.~Wei{\ss}ing.
\newblock Equivalence between polyhedral projection, multiple objective linear
  programming and vector linear programming.
\newblock {\em Mathematical Methods of Operations Research}, 84(2):411--426,
  2016.

\bibitem{bensolve1}
A.~L\"{o}hne and B.~Wei{\ss}ing.
\newblock Bensolve, version 2.1.0.
\newblock A Free Vector Linear Program Solver, \url{http://bensolve.org},
  (2017).

\bibitem{bensolve2}
A.~L\"ohne and B.~Wei{\ss}ing.
\newblock The vector linear program solver {\it {b}ensolve}-- notes on
  theoretical background.
\newblock {\em European J. Oper. Res.}, 260(3):807--813, 2017.

\bibitem{rockafellar}
R.~Rockafellar.
\newblock {\em Convex Analysis}.
\newblock Princeton University Press, Princeton, 1972.

\bibitem{weissing20}
B.~Wei{\ss}ing.
\newblock The polyhedral projection problem.
\newblock {\em Mathematical Methods of Operations Research}, 91(1):55--72,
  2020.

\end{thebibliography}

\end{document}